\newtheorem{thm}{Theorem}[section]
\newtheorem{lemma}[thm]{Lemma}
\newcommand{\Z}{\mathbb{Z}}
\newcommand{\SL}{{\text {\rm SL}}}
\begin{document}
\title{Weakly holomorphic modular forms in prime power levels of genus zero}
\author{Paul Jenkins}
\address{Department of Mathematics, Brigham Young University, Provo, UT 84602, USA}

\author{DJ Thornton}
\thanks{This work was partially supported by a grant from the Simons Foundation (\#281876 to Paul Jenkins).}
\subjclass[2010]{11F37, 11F33}

\begin{abstract}
Let $M_k^\sharp(N)$ be the space of weight $k$, level $N$ weakly holomorphic modular forms with poles only at the cusp at $\infty$. We explicitly construct a canonical basis for $M_k^\sharp(N)$ for $N\in\{8,9,16,25\}$, and show that many of the Fourier coefficients of the basis elements in $M_0^\sharp(N)$ are divisible by high powers of the prime dividing the level $N$. Additionally, we show that these basis elements satisfy a Zagier duality property, and extend Griffin's results on congruences in level 1 to levels 2, 3, 4, 5, 7, 8, 9, 16, and 25.
\end{abstract}

\maketitle

\section{Introduction}
Let $M_k^!(N)$ be the space of weakly holomorphic modular forms of even integer weight $k$ and level $N$, and let $M_k^\sharp(N)$ be the subspace of $M_k^!(N)$ with poles allowed only at the cusp at $\infty$.  There is a canonical basis for $M_k^\sharp(N)$ consisting of forms $f_{k, m}^{(N)}(z)$ indexed by the order of vanishing $m$  at the cusp at $\infty$.  If $\Gamma_0(N)$ is genus zero and $n_0$ is the maximal order of vanishing at $\infty$ for a modular form in $M_k^\sharp(N)$, we define the form $f_{k, m}^{(N)}(z)$ to be the unique form in $M_k^\sharp(N)$ with Fourier expansion $f_{k, m}^{(N)}(z) = q^{-m} + \sum_{n > n_0} a_k^{(N)}(m, n) q^n$, where $q = e^{2\pi i z}$ as usual, so that the gap in the Fourier expansion between the $q^{-m}$ term and the next nonzero coefficient is as large as possible.  Such bases were explicitly constructed for levels 1, 2, 3, 4, 5, 7, 13 in~\cite{Duke, Sharon, Haddock, JT}; in all of these cases, the coefficients $a_k^{(N)}(m, n)$ are integers.

For the classical $j$-function $j(z) = \frac{E_4^3(z)}{\Delta(z)} = q^{-1} + 744 + \sum_{n=1}^\infty c(n) q^n \in M_0^\sharp(1)$, Lehner~\cite{Lehner1, Lehner2} proved in 1949 that the coefficients $c(n)$ satisfy the congruences \[c(2^a3^b5^c7^d) \equiv 0 \pmod{2^{3a+8}3^{2b+3}5^{c+1}7^d}\] for nonnegative integers $a, b, c, d$.  These divisibility results were refined into congruences modulo larger powers of small primes by Kolberg~\cite{Kolberg1, Kolberg2} and Aas~\cite{Aas}, and these congruences were extended to the Fourier coefficients $a_0(m, n)$ of all elements $f_{0, m}$ of the canonical basis for $M_0^!(1)$ by Griffin~\cite{Griffin}.  For $N = 2, 3, 4, 5, 7, 13$, divisibility results similar to Lehner's theorem modulo powers of $N$ were proved in~\cite{Haddock, JT} for the coefficients $a_0^{(N)}(m, n)$ of the basis elements $f_0^{(N)}(z)$.  In this note, for the genus zero prime power levels $N = 8, 9, 16, 25$ we prove analogous congruences for the Fourier coefficients $a_0^{(N)}(m, n)$ of basis elements of weight 0.  Additionally, we give results on duality of Fourier coefficients and generating functions for these canonical bases, similar to known results in lower levels, and extend some of Griffin's results on congruences in level 1 to levels 2, 3, 4, 5, 7, 8, 9, 16, and 25.

\section{Previous work}

Generalizing Lehner's theorem on the coefficients of $j(z)$ to the genus zero prime levels $p = 2, 3, 5, 7$, the first author and Andersen~\cite{Nick} proved the following divisibility results for the Fourier coefficients $a_0^{(p)}(m, n)$ when the power of $p$ dividing the exponent $n$ is greater than the power of $p$ dividing the order $m$ of the pole.
\begin{thm}[Theorem 2~\cite{Nick}]\label{thm:Nick}
Let $p \in\{2,3,5,7\}$, and let
\[f_{0,m}^{(p)}(z) = q^{-m}+\sum\limits_{n=1}^\infty a_0^{(p)}(m,n)q^n\]
be an element of the canonical basis for $M_0^\sharp(p)$, with $m=p^\alpha m'$, $n=p^\beta n'$ and $(m',p)=(n',p)=1$. Then for $\beta > \alpha$, we have
\begin{align*}
a_0^{(2)}(2^\alpha m',2^\beta n') \equiv 0 & \pmod{2^{3(\beta-\alpha)+8}} & {\rm if}\ p=2, \\
a_0^{(3)}(3^\alpha m',3^\beta n') \equiv 0 & \pmod{3^{2(\beta-\alpha)+3}} & {\rm if}\ p=3, \\
a_0^{(5)}(5^\alpha m',5^\beta n') \equiv 0 & \pmod{5^{(\beta-\alpha)+1}} & {\rm if}\ p=5, \\
a_0^{(7)}(7^\alpha m',7^\beta n') \equiv 0 & \pmod{7^{(\beta-\alpha)}} & {\rm if}\ p=7.
\end{align*}
\end{thm}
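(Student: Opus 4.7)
The plan is to prove these congruences by explicitly analyzing the action of the Atkin $U_p$ operator on the canonical basis $\{f_{0,m}^{(p)}\}$ and inducting on $\beta - \alpha$. Since $\Gamma_0(p)$ is genus zero for $p \in \{2,3,5,7\}$, there is a Hauptmodul $\psi_p(z) = q^{-1} + \sum_{n \geq 0} b_p(n) q^n$ (conveniently taken to be an eta-quotient of the form $\bigl(\eta(z)/\eta(pz)\bigr)^{24/(p-1)}$ plus a constant), and each $f_{0,m}^{(p)}$ is the Faber polynomial of degree $m$ in $\psi_p$. The first step would be to pin down this $\psi_p$ explicitly and record the precise $p$-adic valuations of its low-order Fourier coefficients, since every subsequent estimate will be phrased in terms of them.

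Next I would translate the target congruence into a statement about $U_p$. Writing $g_m = f_{0,m}^{(p)}$, one has
\[
a_0^{(p)}(m,\, p^\beta n') \;=\; \bigl[q^{n'}\bigr]\, g_m \,\big|\, U_p^{\beta},
\]
so the goal becomes showing that when $v_p(m) = \alpha < \beta$, the weakly holomorphic form $g_m | U_p^{\beta}$ has all of its Fourier coefficients divisible by the claimed power of $p$. The key tool is the interplay of $U_p$ with the Atkin-Lehner involution $W_p$ on $\Gamma_0(p)$: because $g_m | W_p$ has its only pole at the cusp $0$ (whenever $p \nmid m$), one can expand $g_m | U_p$ as a $\Z$-linear combination of basis elements $g_{m'}$ with $m' \leq m/p$, with coefficients whose $p$-adic valuations are controlled by the Fourier expansions of $\psi_p$ and $\psi_p | W_p$.

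The heart of the proof is then an induction on $\beta - \alpha$. In the base case $\beta = \alpha + 1$ one would verify, via the above expansion, that each Fourier coefficient of $g_m | U_p$ gains an extra factor of $p^{e_p}$ with $e_p \in \{3, 2, 1, 0\}$ for $p \in \{2, 3, 5, 7\}$. The inductive step iterates this, producing the claimed exponents $3(\beta - \alpha) + 8$, $2(\beta - \alpha) + 3$, $(\beta - \alpha) + 1$, and $\beta - \alpha$; the additive constants $8, 3, 1, 0$ encode the initial divisibility at $\beta = 1$, essentially corresponding to Lehner-type congruences for $\psi_p$ itself and, for $p = 2$, recovering Lehner's original result for $j$.

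The main obstacle I anticipate is the case $p = 2$. The exponent $3(\beta - \alpha) + 8$ is rather sharp, and a naive bound using only the $2$-adic valuations of the $b_2(n)$ would almost certainly lose several factors. One likely needs an auxiliary identity relating $g_m | U_2$ directly to a particular $\Z$-combination of $g_{m/2}$'s (when $\alpha \geq 1$) or to an Atkin-Lehner twist of a lower-index basis element (when $\alpha = 0$), and carefully checking that this identity preserves exactly $2^3$ per step throughout the induction is where the bulk of the bookkeeping would concentrate.
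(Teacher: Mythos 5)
This statement is not proved in the paper at all: it is quoted verbatim as Theorem~2 of Andersen--Jenkins~\cite{Nick}, and the present paper only uses it as an input (via Lemma~\ref{ULemma}) to deduce Theorem~\ref{Congruences}. So there is no in-paper proof to compare against; the relevant comparison is with the argument in~\cite{Nick}, which, like your outline, runs by induction on $\beta-\alpha$ using the $U_p$ operator and Lehner-style base congruences. At that level of strategy your plan is aligned with the literature.

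However, as a proof sketch it has a genuine gap at its central step. You assert that, since $g_m\,|\,W_p$ has its pole at $0$, one can expand $g_m\,|\,U_p$ as a $\Z$-linear combination of basis elements $g_{m'}$ with $m'\le m/p$. This is false in general: for $f\in M_0^\sharp(p)$ the form $f\,|\,U_p$ typically acquires a pole at the cusp $0$ (one sees this from $f\,|\,U_p=\frac1p\bigl(\mathrm{Tr}(f\,|\,W_p)-f\,|\,W_p\bigr)$, where the level-one trace contributes poles at both cusps of $\Gamma_0(p)$), so $f\,|\,U_p$ does not lie in $M_0^\sharp(p)$ and cannot be written in the basis $\{g_{m'}\}$. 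The correct decomposition is $g_m\,|\,U_p=(\text{polynomial in }\psi^{(p)})+(\text{polynomial in }\phi^{(p)})$ with $\phi^{(p)}=1/\psi^{(p)}$ carrying the pole at $0$, and the entire quantitative content of the theorem --- the exponents $3(\beta-\alpha)+8$, $2(\beta-\alpha)+3$, etc. --- comes from Lehner's lemma that the coefficients of $\phi^{(p)}\,|\,U_p$, expressed as a polynomial in $\phi^{(p)}$, are divisible by the stated powers of $p$ (e.g.\ $2^8$ for $p=2$). Your proposal defers exactly this to an ``auxiliary identity'' you anticipate needing, and without it neither the base case nor the inductive step is established; the gain of $p^{e_p}$ per application of $U_p$ and the additive constants $8,3,1,0$ are asserted rather than derived. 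To make this a proof you would need to state and verify those $U_p$-identities for each $p$ and then track how the $\psi$-part and the $\phi$-part recombine under iteration of $U_p$, which is precisely the bookkeeping carried out in~\cite{Nick}.
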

Using this theorem, the authors~\cite{JT} proved the following divisibility results for the cases with $\alpha > \beta$.
\begin{thm}[Theorem 1~\cite{JT}]\label{thm:Thornton1} Let $p\in\{2,3,5,7,13\}$ and let \[f_{0,m}^{(p)}(z) = q^{-m} + \sum_{n=1}^\infty a_0^{(p)}(m,n)q^n\] be an element of the canonical basis for $M_0^\sharp(p)$, with $m=p^\alpha m'$, $n=p^\beta n'$ and $(m',p)=(n',p)=1$. Then for $\alpha>\beta$, we have
\begin{align*}
a_0^{(2)}(2^\alpha m',2^\beta n') \equiv 0 & \pmod{2^{4(\alpha-\beta)+8}} & {\rm if}\ p=2, \\
a_0^{(3)}(3^\alpha m',3^\beta n') \equiv 0 & \pmod{3^{3(\alpha-\beta)+3}} & {\rm if}\ p=3, \\
a_0^{(5)}(5^\alpha m',5^\beta n') \equiv 0 & \pmod{5^{2(\alpha-\beta)+1}} & {\rm if}\ p=5, \\
a_0^{(7)}(7^\alpha m',7^\beta n') \equiv 0 & \pmod{7^{2(\alpha-\beta)}} & {\rm if}\ p=7, \\
a_0^{(13)}(13^\alpha m',13^\beta n') \equiv 0 & \pmod{13^{\alpha-\beta}} & {\rm if}\ p = 13.
\end{align*}
\end{thm}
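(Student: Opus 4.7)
The plan is to deduce the case $\alpha > \beta$ from the case $\beta > \alpha$ established in Theorem~\ref{thm:Nick}, by exploiting the Atkin--Lehner (Fricke) involution $W_p = \frac{1}{\sqrt{p}}\sm{0}{-1}{p}{0}$ on $\Gamma_0(p)$, which interchanges the two cusps $0$ and $\infty$ of $X_0(p)$.

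For each prime $p \in \{2,3,5,7,13\}$ we have $(p-1) \mid 24$, so
\[
t_p(z) = \left(\frac{\eta(z)}{\eta(pz)}\right)^{24/(p-1)} = q^{-1} + O(1)
\]
is a Hauptmodul for $\Gamma_0(p)$ with integer Fourier coefficients, and each basis element $f_{0,m}^{(p)}$ can be written as a monic integer polynomial of degree $m$ in $t_p$. The transformation law for $\eta$ gives the key identity $t_p|W_p = p^{12/(p-1)}/t_p$, so the image $f_{0,m}^{(p)}|W_p$ is a Laurent polynomial in $t_p$ in which the coefficient of $t_p^{-j}$ is divisible by $p^{12j/(p-1)}$.

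Next I would establish a Zagier-type duality between $M_0^!(p)$ and $M_2^!(p)$ by summing the residues of $f_{0,m}^{(p)}(z)\,g_{2,n}^{(p)}(z)\,dz$ over the two cusps of $X_0(p)$, where $g_{2,n}^{(p)}$ denotes the canonical weight-two basis element with pole of order $n$ at $\infty$. The residue at $\infty$ contributes a linear combination of $a_0^{(p)}(m,n)$ and $a_2^{(p)}(n,m)$, while the residue at $0$ is computed from the Fricke expansions of Step~1 and supplies an extra factor of $p$ per unit of pole order at $0$. Vanishing of the sum of residues identifies $a_0^{(p)}(m,n)$ with a coefficient in which the roles of $m$ and $n$ are interchanged, up to a factor of $p^{\alpha-\beta}$ arising from the Atkin--Lehner action. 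Applying Theorem~\ref{thm:Nick} to that swapped coefficient (which now satisfies $\beta' > \alpha'$) and multiplying by the extra $p^{\alpha-\beta}$ produces the required modulus in each of the five cases.

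The main obstacle is to pin down exactly how the Atkin--Lehner action boosts the divisibility in the duality identity: one must verify that the weight-two contribution from $g_{2,n}^{(p)}|W_p$ supplies precisely one extra factor of $p$ per unit of $\alpha-\beta$, matching the increase by one in the leading coefficient of Theorem~\ref{thm:Thornton1} compared with Theorem~\ref{thm:Nick}, and that no cancellation reduces this. The boundary case $\beta = 0$ must be handled separately, since Theorem~\ref{thm:Nick} in its stated form does not apply to the swapped coefficient when $\alpha' = 0$; here a direct computation of the constant term in the Fricke expansion provides the base case from which the Atkin--Lehner factor of $p^{\alpha-\beta}$ delivers the claimed divisibility.
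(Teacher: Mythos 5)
The first thing to note is that the paper does not prove this statement at all: it is quoted as Theorem~1 of~\cite{JT} and used as known input. So the only in-paper material to compare against is the closely related proof of Theorem~\ref{Duality}, which is in fact where the idea you are missing lives.

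Your high-level strategy --- trade $a_0^{(p)}(m,n)$ for the index-swapped coefficient $a_0^{(p)}(n,m)$, gain a factor of $p^{\alpha-\beta}$ in the swap, and then apply Theorem~\ref{thm:Nick} to the swapped coefficient, which satisfies the hypothesis of that theorem --- is the right one, and the exponent bookkeeping ($3(\alpha-\beta)+8$ plus one more $(\alpha-\beta)$ gives $4(\alpha-\beta)+8$ for $p=2$, and similarly for $p=3,5,7$) checks out. The genuine gap is the mechanism you propose for the swap. You attribute the factor $p^{\alpha-\beta}$ to the residue at the cusp $0$, computed via the Fricke involution, and you yourself flag this as ``the main obstacle.'' That computation is not the right source and, as described, does not work: if $g_{2,n}^{(p)}$ is taken in $S_2^\sharp(p)$ (vanishing at the cusp $0$), then $f_{0,m}^{(p)}g_{2,n}^{(p)}$ has no residue at $0$ and the residue argument yields only the clean duality $a_0^{(p)}(m,n)=-b_2^{(p)}(n,m)$, with no power of $p$ anywhere; if instead $g_{2,n}^{(p)}$ is allowed a pole at $0$, there is no reason the cusp-$0$ contribution is divisible by exactly $p^{\alpha-\beta}$, and nothing in your sketch establishes this. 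The factor actually comes from the $\theta$-operator, exactly as in the paper's proof of Theorem~\ref{Duality}: for genus zero $\Gamma_0(p)$ the space $S_2^\sharp(p)$ is spanned by the forms $\theta f_{0,m}^{(p)}=q\frac{d}{dq}f_{0,m}^{(p)}$, so $g_{2,m}^{(p)}=-\frac{1}{m}\theta f_{0,m}^{(p)}$ and $b_2^{(p)}(n,m)=-\frac{m}{n}a_0^{(p)}(n,m)$. Combined with duality this gives the exact identity $n\,a_0^{(p)}(m,n)=m\,a_0^{(p)}(n,m)$, and $m/n=p^{\alpha-\beta}m'/n'$ with $(n',p)=1$ supplies precisely the extra $p^{\alpha-\beta}$, with no cancellation issue to rule out. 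Two further repairs: the case $p=13$ is not covered by Theorem~\ref{thm:Nick}, but there the claimed modulus is only $13^{\alpha-\beta}$, which follows from $n\,a_0(m,n)=m\,a_0(n,m)$ and integrality of the coefficients alone; and your worry about the boundary case is misplaced, since the swapped coefficient $a_0^{(p)}(p^\beta n',p^\alpha m')$ has pole exponent $\beta$ and coefficient exponent $\alpha$ with $\alpha>\beta\geq 0$, which is exactly the situation Theorem~\ref{thm:Nick} covers (it permits the pole exponent to be zero).
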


Additionally, for $N=4$, the authors proved the following theorem.
\begin{thm}[Theorem 2~\cite{JT}]\label{thm:Thornton3}
Let $f_{0,m}^{(4)}(z) = q^{-m} + \sum a_0^{(4)}(m,n)q^n \in M_0^\sharp(4)$ be an element of the canonical basis. Write $m = 2^\alpha m'$ and $n = 2^\beta n'$ with $m',n'$ odd. Then for $\alpha\neq \beta$, we have
\begin{align*}
a_0^{(4)}(2^\alpha m',2^\beta n') \equiv 0 & \pmod{2^{4(\alpha-\beta)+8}} & {\rm if}\ \alpha > \beta, \\
a_0^{(4)}(2^\alpha m',2^\beta n') \equiv 0 & \pmod{2^{3(\beta-\alpha)+8}} & {\rm if}\ \beta > \alpha.
\end{align*}
\end{thm}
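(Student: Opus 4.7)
The plan is to reduce the level $4$ statement to the already-proved level $2$ congruences in Theorems~\ref{thm:Nick} and~\ref{thm:Thornton1} by exploiting two operators linking levels $2$ and $4$: the ``double-up'' operator $V_2 \colon f(z) \mapsto f(2z)$, which should map $M_0^\sharp(2)$ into $M_0^\sharp(4)$, and the translation involution $\tau \colon f(z) \mapsto f\!\left(z+\tfrac{1}{2}\right)$, which should act as a symmetry of $M_0^\sharp(4)$. The first operator identifies the even-indexed level $4$ basis elements with pullbacks of level $2$ basis elements; the second forces a parity condition on Fourier coefficients.

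The first key identity is $V_2 f_{0,m}^{(2)}(z) = f_{0,2m}^{(4)}(z)$. To establish it I would first check that if $g\in M_0^\sharp(2)$, then $g(2z)$ is $\Gamma_0(4)$-invariant and holomorphic at both cusps $0$ and $1/2$ of $\Gamma_0(4)$; both follow from the fact that the ``double-up'' map $X_0(4)\to X_0(2)$, $z\mapsto 2z$, sends the two cusps $0$ and $1/2$ to the single cusp $0$ of $\Gamma_0(2)$. The Fourier expansion
\[
V_2 f_{0,m}^{(2)}(z) \;=\; q^{-2m} \;+\; \sum_{n \geq 1} a_0^{(2)}(m,n)\, q^{2n}
\]
then has pole of order $2m$ at $\infty$ and vanishes in every position from $q^{-2m+1}$ through $q^{0}$, matching the defining property of $f_{0,2m}^{(4)}$. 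Uniqueness of the canonical basis gives $V_2 f_{0,m}^{(2)} = f_{0,2m}^{(4)}$, whence
\[
a_0^{(4)}(2m,\, 2n) \;=\; a_0^{(2)}(m,n) \qquad\text{and}\qquad a_0^{(4)}(2m,\, 2n+1) \;=\; 0.
\]

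For the odd-indexed basis elements I would use $\tau$. One checks that $\smat{1}{1/2}{0}{1}$ normalizes $\Gamma_0(4)$ in $\mathrm{PSL}_2(\mathbb{R})$ and that $\tau$ permutes the cusps of $\Gamma_0(4)$ by fixing $\infty$ and swapping $0$ with $1/2$, so $\tau$ preserves $M_0^\sharp(4)$; on Fourier coefficients it acts by $c_n \mapsto (-1)^n c_n$. Applied to $f_{0,m}^{(4)}$ this produces a form with leading term $(-1)^m q^{-m}$ still satisfying the canonical vanishing, so uniqueness gives $\tau f_{0,m}^{(4)} = (-1)^m f_{0,m}^{(4)}$. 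Hence $a_0^{(4)}(m,n) = 0$ whenever $m$ and $n$ have opposite parity.

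With these two facts the theorem is immediate. If exactly one of $\alpha,\beta$ is zero then $m$ and $n$ have opposite parity, so $a_0^{(4)}(m,n)=0$ and the congruence is trivial. Otherwise $\alpha\geq 1$ and $\beta\geq 1$, and the $V_2$-identity gives
\[
a_0^{(4)}(2^\alpha m',\, 2^\beta n') \;=\; a_0^{(2)}\bigl(2^{\alpha-1}m',\, 2^{\beta-1}n'\bigr),
\]
to which Theorem~\ref{thm:Thornton1} (when $\alpha>\beta$) or Theorem~\ref{thm:Nick} (when $\beta>\alpha$) applies with shifted $2$-adic exponents, yielding divisibility by $2^{4(\alpha-\beta)+8}$ or $2^{3(\beta-\alpha)+8}$ exactly as claimed. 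The main technical obstacle is the cusp-by-cusp verification that $V_2$ and $\tau$ respect the holomorphy condition at the cusps $0$ and $1/2$ of $\Gamma_0(4)$, which requires careful bookkeeping of cusp widths and coset representatives but is otherwise routine.
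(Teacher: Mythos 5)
Your argument is correct, and it is essentially the approach the paper itself takes: this statement is quoted from~\cite{JT} rather than proved here, but the paper's Lemma~\ref{ULemma} and the deduction following it establish Theorem~\ref{Congruences} for levels $8, 9, 16, 25$ by exactly your two ingredients --- the identification $V_p f_{0,m}^{(N/p)} = f_{0,pm}^{(N)}$ via uniqueness of the canonical basis, and the vanishing of coefficients of ``mismatched'' $p$-adic valuation (which the paper extracts from $V_2U_2 f = \tfrac12\bigl(f(z) + f(z+\tfrac12)\bigr)$ annihilating odd-index basis elements, equivalent to your $\tau$-symmetry). The reduction to Theorems~\ref{thm:Nick} and~\ref{thm:Thornton1} with shifted exponents is handled correctly, including the trivial case where exactly one of $\alpha,\beta$ is zero.
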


In~\cite{Griffin}, Griffin proved the following congruence results for the Fourier coefficients $a_0^{(1)}(m, n)$ of the canonical basis elements for $M_0^{!}(1) = M_0^\sharp(1)$.
\begin{thm}[Theorem 2.1 \cite{Griffin}]\label{thm:Griffin}
Write $m = p^\alpha m'$ and $p^\beta n'$ where $\alpha,\beta\geq 0$ and $m',n' \not\equiv 0\pmod{p}$. Then the following congruences hold.

\noindent For $p=2$:
\begin{align*}
    a_0^{(1)}(2^\alpha m', 2^\beta n') &\equiv -2^{3(\beta-\alpha) + 8}3^{\beta-\alpha-1}\cdot m'\sigma_7(m')\sigma_7(n') \pmod{2^{3(\beta-\alpha)+13}} \hspace{0.5cm} \text{if\ } \beta > \alpha,\\
    &\equiv -2^{4(\beta-\alpha) + 8}3^{\beta-\alpha-1}\cdot m'\sigma_7(m')\sigma_7(n') \pmod{2^{4(\beta-\alpha)+13}} \hspace{0.5cm} \text{if\ } \alpha > \beta, \\
    &\equiv 20m'\sigma_7(m')\sigma_7(n') \pmod{2^7} \hspace{0.5cm} \text{if\ } \alpha=\beta, m'n'\equiv 1\pmod{8}, \\
    &\equiv \frac{1}{2}m'\sigma(m')\sigma(n') \pmod{2^3} \hspace{0.5cm} \text{if\ } \alpha=\beta, m'n' \equiv 3\pmod{8}, \\
    &\equiv -12m'\sigma_7(m')\sigma_7(n') \pmod{2^8} \hspace{0.5cm} \text{if\ } \alpha=\beta, m'n' \equiv 5\pmod{8}.
\end{align*}
For $p=3$:
\begin{align*}
    a_0^{(1)}(3^\alpha m',3^\beta n') &\equiv \mp3^{2(\beta-\alpha)+3}10^{\beta-\alpha-1} \frac{\sigma(m')\sigma(n')}{n'} \pmod{3^{2(\beta-\alpha)+6}} \\& \hspace{2.5cm} \text{if\ } \beta>\alpha, m'n' \equiv \pm1\pmod{3}, \\
    &\equiv \mp3^{3(\alpha-\beta)+3}10^{\alpha-\beta-1} \frac{\sigma(m')\sigma(n')}{n'} \pmod{3^{3(\alpha-\beta)+6}} \\& \hspace{2.5cm} \text{if\ } \alpha>\beta, m'n' \equiv \pm1\pmod{3}, \\
    &\equiv 2\cdot3^3\frac{\sigma(m')\sigma(n')}{n'} \pmod{3^7} \\& \hspace{2.5cm} \text{if\ } \alpha=\beta, m'n' \equiv 1\pmod{3}.
\end{align*}
For $p=5$:
\begin{align*}
    a_0^{(1)}(5^\alpha m',5^\beta n') &\equiv -5^{\beta-\alpha+1}3^{\beta-\alpha-1}(m')^2n'\sigma(m') \sigma(n') \pmod{5^{\beta-\alpha+2}} \hspace{0.5cm} \text{if\ } \beta>\alpha, \\
    &\equiv -5^{2(\alpha-\beta)+1}3^{\alpha-\beta-1}(m')^2n'\sigma(m')\sigma(n') \pmod{5^{2(\alpha-\beta)+2}} \hspace{0.5cm} \text{if\ } \alpha>\beta, \\
    &\equiv 10(m')^2n'\sigma(m')\sigma(n') \pmod{5^2} \hspace{0.5cm} \text{if\ } \alpha=\beta, \left(\frac{m'n'}{5}\right) = -1.
\end{align*}
For $p=7$:
\begin{align*}
    a_0^{(1)}(7^\alpha m',7^\beta n') &\equiv 7^{\beta-\alpha}5^{\beta-\alpha-1}(m')^2n'\sigma_3(m') \sigma_3(n') \pmod{7^{\beta-\alpha+1}} \hspace{0.5cm} \text{if\ } \beta>\alpha, \\
    &\equiv 7^{2(\alpha-\beta)}5^{\alpha-\beta-1}(m')^2n'\sigma_3(m')\sigma_3(n') \pmod{7^{2(\alpha-\beta)+1}} \hspace{0.5cm} \text{if\ } \alpha>\beta, \\
    &\equiv 2(m')^2n'\sigma_3(m')\sigma_3(n') \pmod{7} \hspace{0.5cm} \text{if\ } \alpha=\beta, \left(\frac{m'n'}{7}\right) = 1.
\end{align*}
\end{thm}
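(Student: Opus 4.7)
My approach to Theorem~\ref{thm:Griffin} would rest on three pillars: Zagier-style duality to halve the casework, the Faber-polynomial/Hecke-operator description of the canonical basis $\{f_{0,m}^{(1)}\}$, and the classical Lehner--Kolberg--Aas refinements of the congruences for the coefficients $c(n)$ of $j(z)-744$. The plan is to reduce a general $a_0^{(1)}(p^\alpha m',p^\beta n')$ to a weighted divisor sum of $c$-values, then feed the known congruences for $c$ into that sum and track cancellations.

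\textbf{Step 1: Duality.} In level $1$ one has the pairing $a_0^{(1)}(m,n) = -a_2^{(1)}(n,m)$ between the weight-$0$ and weight-$2$ canonical bases. Using this together with the standard relation between $f_{0,m}^{(1)}$ and $f_{2,m}^{(1)}$ (obtained via $\theta = \frac{1}{2\pi i}\frac{d}{dz}$ and an Eisenstein correction), one can swap the roles of $m$ and $n$ in the congruence. This should trade a factor of $m/n = p^{\alpha-\beta}(m'/n')$ between the two sides, which is precisely what accounts for the discrepancy between the $3(\beta-\alpha)+8$ exponent in the case $\beta>\alpha$ and the $4(\alpha-\beta)+8$ exponent in the case $\alpha>\beta$ for $p=2$, and similarly for the other primes. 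So I would prove just the case $\beta > \alpha$ and the equal case, then deduce $\alpha > \beta$ by duality.

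\textbf{Step 2: Hecke expansion.} The Faber polynomial construction gives $f_{0,m}^{(1)} = m\bigl(j(z)-744\bigr)\mid T_m$ up to a correction in the constant term, so that
\[ a_0^{(1)}(m,n) = \sum_{d \mid \gcd(m,n)} d\cdot c\!\left(\tfrac{mn}{d^2}\right). \]
Writing $m = p^\alpha m'$ and $n = p^\beta n'$ and splitting the sum over $d$ by its $p$-adic valuation lets me isolate the terms contributing to each power of $p$: the divisor $d=p^j d_0$ with $0\le j\le\alpha$, $d_0\mid\gcd(m',n')$ yields the summand $p^j d_0\cdot c(p^{\alpha+\beta-2j} m'n'/d_0^2)$.

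\textbf{Step 3: Inserting Lehner--Kolberg--Aas.} For each prime $p\in\{2,3,5,7\}$ the classical works supply exact congruences for $c(p^k\ell)$ with $(\ell,p)=1$, of the form $c(p^k\ell)\equiv p^{ak+b}\cdot F_p(\ell)\pmod{p^{ak+c}}$ where $F_p$ is an explicit multiplicative function (involving $\sigma_7$ for $p=2$, $\sigma_1/n'$ for $p=3$, etc., matching the statement). Plugging these into the divisor sum of Step~2, the terms with smaller $j$ carry higher powers of $p$ from the $c$-factor and therefore drop out modulo the target modulus, leaving only the $j=\min(\alpha,\beta)$ contribution as the leading piece; this produces the stated $p$-power and the arithmetic factor $m'\sigma_w(m')\sigma_w(n')$ from the multiplicativity of $F_p$.

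\textbf{Main obstacle.} The genuinely hard part will be matching the stated modulus (for instance $2^{3(\beta-\alpha)+13}$ rather than merely $2^{3(\beta-\alpha)+8}$), which requires using the Kolberg--Aas refinements one order beyond Lehner and carefully tracking the second-order terms through the divisor sum. Even harder is the equal case $\alpha=\beta$ with the side conditions on $m'n' \pmod{8}$ or on the Legendre symbol $\left(\tfrac{m'n'}{p}\right)$: here the would-be leading term in the divisor sum vanishes and the surviving congruence is controlled by a parity/residue obstruction. Rather than chase these cancellations by hand, I would at this stage switch to the Serre--Swinnerton-Dyer framework of $p$-adic modular forms and express the relevant pieces of $f_{0,m}^{(1)}$ as limits of classical forms, which gives a clean conceptual explanation for both the leading exponent and the residue conditions.
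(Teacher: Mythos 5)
First, a point of orientation: the paper does not prove this statement at all --- it is Theorem 2.1 of Griffin's paper \cite{Griffin}, quoted verbatim as background, so there is no internal proof to compare against. Judging your proposal on its own merits: the overall architecture (reduce $a_0^{(1)}(m,n)$ to a divisor sum of coefficients of $j-744$ via the Hecke/Faber description of $f_{0,m}^{(1)}$, insert the Kolberg--Aas congruences, and use duality plus the $\theta$-operator to trade the $\beta>\alpha$ and $\alpha>\beta$ cases) is reasonable and is close to what Griffin actually does. Your Step 1 in particular checks out: combining $a_0^{(1)}(m,n)=-a_2^{(1)}(n,m)$ with $\theta f_{0,m}^{(1)}=-m\,f_{2,m}^{(1)}$ gives $a_0^{(1)}(m,n)=\tfrac{m}{n}\,a_0^{(1)}(n,m)$, and since $m'/n'$ is a $p$-adic unit this does convert the $\beta>\alpha$ congruences into the $\alpha>\beta$ ones with exactly the stated shift of exponent (e.g.\ $3(\beta-\alpha)+8$ becoming $4(\alpha-\beta)+8$ for $p=2$).

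However, Step 2 contains a concrete error that propagates. The correctly normalized weight-$0$ Hecke action gives $a_0^{(1)}(m,n)=\sum_{d\mid\gcd(m,n)}\tfrac{m}{d}\,c\!\left(\tfrac{mn}{d^2}\right)$, not $\sum_{d\mid\gcd(m,n)}d\,c\!\left(\tfrac{mn}{d^2}\right)$: in weight $k=0$ the divisor weight is $d^{k-1}=d^{-1}$, and rescaling by $m$ to normalize the principal part to $q^{-m}$ yields $m/d$. The two expressions agree only when $m\mid n$, and the discrepancy is not cosmetic. With your version the term $d=p^jd_0$ contributes $2$-adic valuation about $j+3(\alpha+\beta-2j)+8$, minimized at $j=\alpha$ with value $3\beta-2\alpha+8$, which fails to match the asserted leading exponent $3(\beta-\alpha)+8$ as soon as $\alpha\geq 1$; with the correct weight $m/d$ the count is $(\alpha-j)+3(\alpha+\beta-2j)+8$, minimized at $j=\alpha$ with value exactly $3(\beta-\alpha)+8$. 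A two-line sanity check exposes the problem: $a_0^{(1)}(2,2)=2c(4)+c(1)\equiv 20\pmod{2^7}$, consistent with the theorem, whereas your formula gives $c(4)+2c(1)\equiv 40\pmod{2^7}$, which already has the wrong $2$-adic valuation. Beyond this, Step 3 --- and especially the $\alpha=\beta$ cases with the residue conditions on $m'n'$ --- is where essentially all of the work lives: you still need the exact Kolberg--Aas congruences for $c(p^k\ell)$ together with Hecke-multiplicativity identities for $\sigma_7$, $\sigma_3$, $\sigma$ modulo the relevant prime powers to assemble the factor $m'\sigma_w(m')\sigma_w(n')$, and deferring that to an unspecified Serre--Swinnerton-Dyer argument leaves the proof incomplete rather than finished by a different method.
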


\section{Notation and statement of results}

The first main result of this paper is the following theorem, extending Theorems~\ref{thm:Nick} and~\ref{thm:Thornton1} and giving divisibility results for the coefficients $a_0^{(N)}(m, n)$ in the prime power levels $N = 8, 9, 16, 25$.
\begin{thm}\label{Congruences}
Let $f_{0, m}^{(N)}(z) \in M_0^\sharp(N)$ be an element of the canonical basis for $N \in \{8, 9, 16, 25\}$.  Assume that $(m', N) = (n', N) = 1$.  The following congruences hold for $\alpha \neq \beta$.
\begin{align*}
a_0^{(8)}(2^\alpha m', 2^\beta n')  \equiv a_0^{(16)}(2^\alpha m', 2^\beta n')  \equiv & 0  \pmod{2^{4(\alpha-\beta)+8}} & {\rm if}\ \alpha > \beta, \\
a_0^{(8)}(2^\alpha m',2^\beta n')  \equiv a_0^{(16)}(2^\alpha m', 2^\beta n')  \equiv & 0  \pmod{2^{3(\beta-\alpha)+8}} & {\rm if}\ \beta > \alpha, \\
a_0^{(9)}(3^\alpha m', 3^\beta n')  \equiv & 0 \pmod{3^{3(\alpha - \beta) + 3}} & {\rm if}\ \alpha > \beta, \\
a_0^{(9)}(3^\alpha m', 3^\beta n')  \equiv & 0 \pmod{3^{2(\beta - \alpha) + 3}} & {\rm if}\ \beta > \alpha, \\
a_0^{(25)}(5^\alpha m', 5^\beta n')  \equiv & 0 \pmod{5^{2(\alpha - \beta) + 1}} & {\rm if}\ \alpha > \beta, \\
a_0^{(25)}(5^\alpha m', 5^\beta n')  \equiv & 0 \pmod{5^{(\beta - \alpha)+1}} & {\rm if}\ \beta > \alpha.
\end{align*}
\end{thm}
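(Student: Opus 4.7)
The divisibility exponents in Theorem~\ref{Congruences} for each level $N = p^k$ match exactly those in Theorems~\ref{thm:Nick} and~\ref{thm:Thornton1} at the prime level $p$, and Theorem~\ref{thm:Thornton3} already establishes this pattern at the intermediate level $N = 4$. My plan is therefore to reduce the congruences at each level $N \in \{8, 9, 16, 25\}$ to the already-known congruences at the underlying prime level $p \in \{2, 3, 5\}$, extending the strategy used for $N = 4$ in Theorem~\ref{thm:Thornton3}.

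The first stage is to construct the canonical basis explicitly. Since $\Gamma_0(N)$ has genus zero for each $N$ under consideration, a Hauptmodul $\psi_N$ for $\Gamma_0(N)$ can be written as an $\eta$-quotient, and I would normalize $\psi_N$ to be holomorphic at every cusp except $\infty$ and to have minimal positive pole order there. Proceeding inductively on $m$, I would build $f_{0,m}^{(N)}$ from suitable polynomials in $\psi_N$, subtracting off lower-order principal parts until the Fourier expansion takes the form $q^{-m} + O(q^{n_0+1})$ with the largest possible gap. Because $\psi_N$ has integer Fourier coefficients, so does each $f_{0,m}^{(N)}$.

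The second stage is the crucial comparison with the level-$p$ basis. Using the inclusion $M_0^\sharp(p) \subset M_0^\sharp(N)$, the Atkin--Lehner involutions at level $N$, and the Atkin $U_p$ operator (which maps $M_0^!(p^k)$ to $M_0^!(p^{k-1})$ and therefore relates level-$N$ forms to forms at lower levels), I would express $f_{0,m}^{(N)}$, modulo a small fixed power of $p$, as an explicit integer linear combination of level-$p$ canonical basis elements $f_{0,j}^{(p)}$, possibly together with their images under $V_p$ or an Atkin--Lehner involution. The gap in the Fourier expansion guaranteed by the canonical-basis construction in the first stage is what forces the coefficients of this expansion to be unique and computable. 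Once the expansion is in hand, substituting the congruences of Theorems~\ref{thm:Nick} and~\ref{thm:Thornton1} yields the stated divisibilities at level $N$.

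The main obstacle I anticipate lies in the second stage: producing a precise comparison between the canonical bases at levels $N$ and $p$ while preserving enough $p$-adic precision to survive the substitution. The cases $N = 16$ and $N = 25$ will be the most delicate; $N = 16$ involves the largest collection of cusps and Atkin--Lehner involutions to bookkeep, while $N = 25$ starts from the weakest prime-level input congruence in Theorem~\ref{thm:Nick}, leaving the smallest $p$-adic margin to work with. In both cases the exponents $\alpha - \beta$ (or $\beta - \alpha$) must grow with the difference of valuations of $m$ and $n$, so the reduction must be valid with $p$-adic accuracy that is uniform in $m$ and $n$, not merely on a case-by-case basis.
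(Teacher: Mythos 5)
Your overall strategy --- relate the level-$N$ canonical basis to lower-level bases via the $U_p$ and $V_p$ operators and then quote the known prime-level and level-$4$ congruences --- is the same as the paper's. But there is a genuine gap in how you propose to carry out the comparison. You say you would express $f_{0,m}^{(N)}$, \emph{modulo a small fixed power of $p$}, as a combination of lower-level basis elements. A congruence modulo a fixed power $p^C$ can never yield the conclusions of Theorem~\ref{Congruences}, whose moduli $p^{c(\alpha-\beta)+d}$ are unbounded as $\alpha-\beta$ grows: once $c(\alpha-\beta)+d > C$ the comparison says nothing. (You flag the need for accuracy ``uniform in $m$ and $n$'' at the end, but uniform fixed accuracy is still insufficient; the required accuracy must grow with $|\alpha-\beta|$.) What the argument actually requires, and what the paper proves, is a collection of \emph{exact} identities: $U_p(f_{0,pm}^{(N)}) = f_{0,m}^{(N/p)}$, $V_p(f_{0,m}^{(N/p)}) = f_{0,pm}^{(N)}$, and $U_p(f_{0,m'}^{(N)}) = 0$ for $(m',N)=1$. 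These follow from a rigidity argument: for instance, $V_2U_2$ preserves $M_0^\sharp(8)$, so $V_2\bigl(U_2(f_{0,2m}^{(8)}) - f_{0,m}^{(4)}\bigr)$ is a form in $M_0^\sharp(8)$ vanishing at $\infty$ and is therefore identically zero. The exact identities give $a_0^{(N)}(pm,pn) = a_0^{(N/p)}(m,n)$ on the nose, plus the exact vanishing of $a_0^{(N)}(m',p^\beta n')$ and $a_0^{(N)}(p^\alpha m', n')$ when exactly one of the pole order and the exponent is coprime to $p$ (the latter because $V_pU_p$ acts as the identity on $f_{0,pm}^{(N)}$, so only exponents divisible by $p$ occur in its expansion). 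Your plan never addresses these mixed-valuation cases, which include $\alpha=0<\beta$ and $\beta=0<\alpha$ and cannot be reached by iterating the coefficient identity.

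Two smaller points. The inclusion $M_0^\sharp(p)\subset M_0^\sharp(N)$ you invoke is false: a form with poles only at the cusp $\infty$ of $\Gamma_0(p)$ generally acquires poles at other cusps of $\Gamma_0(N)$ lying over $\infty$ (e.g.\ the cusp $\tfrac{1}{2}$ of $\Gamma_0(4)$ is $\Gamma_0(2)$-equivalent to $\infty$); only $M_0^!(p)\subset M_0^!(N)$ holds. And you do not need Atkin--Lehner involutions or a reduction all the way to level $p$: the paper reduces $N=8$ and $N=16$ to level $4$ (then quotes Theorem~\ref{thm:Thornton3}) and reduces $N=9,25$ to levels $3,5$, in a single $U_p$/$V_p$ step each.
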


The space $M_k^\sharp(N)$ has a subspace $S_k^\sharp(N)$ consisting of forms which may have poles at the cusp at $\infty$, but which vanish at all other cusps of $\Gamma_0(N)$.  Just as the $f_{k, m}^{(N)}$ form a basis for $M_k^\sharp(N)$, the space $S_k^\sharp(N)$ has a canonical basis $g_{k, m}^{(N)}$ with Fourier expansions of the form \[g_{k, m}^{(N)}(z) = q^{-m} + \sum_{n > n_1} b_k^{(N)}(m, n) q^n,\] where $n_1$ is the maximal order of vanishing at $\infty$ of a form in $S_k^\sharp(N)$.  The second main result of this paper gives a duality result between the coefficients of $f_{k, m}^{(N)}$ and the coefficients of $g_{2-k, n}^{(N)}$.  Similar theorems giving such duality results in levels $1, 2, 3, 4, 5, 7, 13$ appeared in~\cite{Duke, Sharon, Haddock, JT}.
\begin{thm}\label{Duality}
For $N = 8, 9, 16, 25$, any even integer weight $k$, and any integers $m, n$, we have \[a_k^{(N)}(m, n) = -b_{2-k}^{(N)}(n, m).\]
\end{thm}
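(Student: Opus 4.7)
The plan is to follow the approach of the level-$1$ proof of Duke--Jenkins and its higher-level analogues in~\cite{Duke, Sharon, Haddock, JT}, by forming the product
\[
h(z) := f_{k,m}^{(N)}(z) \cdot g_{2-k,n}^{(N)}(z).
\]
Its weight is $2$, it is holomorphic on the upper half plane, and at any cusp $c \ne \infty$ of $\Gamma_0(N)$ the second factor vanishes by definition of $S_{2-k}^\sharp(N)$, killing any pole of the first factor. Thus $h \in S_2^\sharp(N)$. On the compact modular curve $X_0(N)$, the differential $h(z)\,dz$ is then a meromorphic one-form whose only possible pole is at $\infty$; the residue theorem forces that residue to vanish, and since it equals $\tfrac{1}{2\pi i}$ times the constant term in the $q$-expansion of $h$, we conclude that $[q^0]\,h = 0$.

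Writing
\[
f_{k,m}^{(N)}(z) = q^{-m} + \sum_{j > n_0} a_k^{(N)}(m,j)\,q^j, \qquad g_{2-k,n}^{(N)}(z) = q^{-n} + \sum_{\ell > n_1} b_{2-k}^{(N)}(n,\ell)\,q^\ell,
\]
where $n_0$ and $n_1$ denote the maximal orders of vanishing at $\infty$ of forms in $M_k^\sharp(N)$ and $S_{2-k}^\sharp(N)$ respectively, and extracting the coefficient of $q^0$ in the product yields
\[
0 = a_k^{(N)}(m,n) + b_{2-k}^{(N)}(n,m) + \sum_{n_0 < j < -n_1} a_k^{(N)}(m,j)\,b_{2-k}^{(N)}(n,-j).
\]
The identity of the theorem then reduces to showing that the cross sum is empty, which is equivalent to $n_0 + n_1 \geq -1$.

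The hard part will be this last step: verifying $n_0(k) + n_1(2-k) \geq -1$ for every even weight $k$ and every $N \in \{8, 9, 16, 25\}$. In fact one expects the sharper equality $n_0 + n_1 = -1$, corresponding to a Serre-duality pairing between $M_k^\sharp(N)$ and $S_{2-k}^\sharp(N)$. I would establish this by a case-by-case Riemann--Roch or valence-formula computation on the genus-zero curve $X_0(N)$, accounting for the cusp widths and the few elliptic points on $\Gamma_0(N)$ at each of the four levels; the explicit canonical bases constructed earlier in the paper already encode most of the needed dimensional data. The theorem's assertion for arbitrary integer pairs $(m, n)$, including indices outside the natural range of the bases, then follows from the standard convention that any Fourier coefficient lying within the gap of a canonical basis element is zero.
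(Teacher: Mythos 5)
Your proposal is correct in outline and isolates exactly the two points the argument turns on, but it diverges from the paper's proof at both of them. For the vanishing of the constant term of $h = f_{k,m}^{(N)} g_{2-k,n}^{(N)} \in S_2^\sharp(N)$, you invoke the residue theorem for the meromorphic differential $h(z)\,dz$ on $X_0(N)$; this works (the cusp at $\infty$ has width $1$, and for $N=25$ the two elliptic points of order $2$ contribute no poles since $h$ is holomorphic there), whereas the paper instead observes that $n_1=-1$ in weight $2$ forces $S_2^\sharp(N)$ to be spanned by the images $\theta f_{0,m}^{(N)}$ of the weight-$0$ basis under $\theta = q\,\frac{d}{dq}$, and derivatives have no constant term. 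The two arguments are essentially equivalent in strength here; the residue argument is more classical, the $\theta$-operator argument reuses the explicit basis machinery already built. For the second point, you correctly reduce the theorem to the inequality $n_0(k) + n_1(2-k) \geq -1$ (so that the cross sum $\sum_{n_0 < j < -n_1}$ is empty), but you defer its verification to a Riemann--Roch or valence-formula computation. This is unnecessary: the paper's Table~\ref{n0n1table} already records $n_0$ and $n_1$ for all four levels as a byproduct of the explicit basis constructions, and a one-line check (e.g.\ $n_0(k)+n_1(2-k) = k + (2-k) - 3 = -1$ for $N=8,9$, $= 2k + 2(2-k) - 5 = -1$ for $N=16$, and similarly for $N=25$ after splitting $k = 4\ell + k'$) confirms the expected equality $n_0 + n_1 = -1$, which is the relation $n_1 = n_0 - c_N + 1$ applied across dual weights. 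So there is no genuine gap, only a step you flagged as hard that the paper's earlier constructions make immediate; to complete your writeup you would simply substitute the tabulated values rather than redo a valence computation.
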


The third main result of this paper is an extension, in certain cases, of Griffin's results in Theorem~\ref{thm:Griffin} to levels $2, 3, 4, 5, 7, 8, 9, 16$, and $25$.
\begin{thm}\label{Griffinextension}
Let $N \in \{2, 3, 4, 5, 7, 8, 9, 16, 25\}$.  We have the following congruences for the coefficients $a_0^{(N)}(p^\alpha m', p^\beta n')$, where $p$ is the prime dividing $N$ and $(m', p) = (n', p) = 1$.

\begin{align*} a_0^{(2)}(2^\alpha m',2^\beta n') &\equiv -2^{11}m'\sigma_7(m')\sigma_7(n') \pmod{2^{16}}, &\text{if\ } \alpha = \beta -1, \\
&\equiv 20m'\sigma_7(m')\sigma_7(n') \pmod{2^7}, &\text{if\ } \alpha=\beta, m'n'\equiv1\pmod{8}, \\
&\equiv \frac{1}{2}m'\sigma(m')\sigma(n') \pmod{2^3}, &\text{if\ } \alpha=\beta, m'n'\equiv3\pmod{8}, \\
&\equiv -12m'\sigma_7(m')\sigma_7(n') \pmod{2^8}, &\text{if\ } \alpha=\beta, m'n'\equiv5\pmod{8}.
\end{align*}

\begin{align*} a_0^{(3)}(3^\alpha m',3^\beta n') &\equiv \mp 3^5\frac{\sigma(m')\sigma(n')}{n'} \pmod{3^8}, &\text{if\ } \alpha = \beta - 1, m'n'\equiv \pm1\pmod{3}, \\
&\equiv \mp 3^6\frac{\sigma(m')\sigma(n')}{n'} \pmod{3^9}, &\text{if\ } \alpha = \beta + 1, m'n'\equiv \pm1\pmod{3}, \\
&\equiv \frac{2\cdot3^3\sigma(m')\sigma(n')}{n'} \pmod{3^7}, &\text{if\ } \alpha=\beta, m'n' \equiv 1 \pmod{3}.
\end{align*}

\begin{align*}
a_0^{(5)}(5^\alpha m',5^\beta n') &\equiv -5^{\beta-\alpha+1}3^{\beta-\alpha-1}(m')^2n'\sigma(m')\sigma(n') \pmod{5^{\beta-\alpha+2}}, &\text{if\ } 0 < \beta-\alpha \leq 3, \\
&\equiv -5^3(m')^2n'\sigma(m')\sigma(n') \pmod{5^4}, &\text{if\ } \alpha = \beta +1, \\
&\equiv 10(m')^2n'\sigma(m')\sigma(n') \pmod{5^2}, &\text{if\ } \alpha = \beta, \left(\frac{m'n'}{5}\right) = -1. \end{align*}

\begin{align*}
a_0^{(7)}(7^\alpha m',7^\beta n') &\equiv 7^{\beta-\alpha+1}5^{\beta-\alpha-1}(m')^2n'\sigma_3(m')\sigma_3(n') \pmod{7^{\beta-\alpha+1}}, &\text{if\ } 0 < \beta-\alpha \leq 3, \\
&\equiv 7^2(m')^2(n')\sigma_3(m')\sigma_3(n') \pmod{7^3}, &\text{if\ } \alpha=\beta+1, \\
&\equiv 2(m')^2n'\sigma_3(m')\sigma_3(n') \pmod{7}, &\text{if\ } \alpha=\beta, \left(\frac{m'n'}{7}\right)=1. \end{align*}

\begin{align*}
a_0^{(4)}(2^\alpha m',2^\beta n') &\equiv -2^{11}m'\sigma_7(m')\sigma_7(n') \pmod{2^{16}}, &\text{if\ } \alpha = \beta - 1, \\
&\equiv 20m'\sigma_7(m')\sigma_7(n') \pmod{2^7}, &\text{if\ } \alpha = \beta, m'n'\equiv1\pmod{8}, \\
&\equiv \frac{1}{2}m'\sigma(m')\sigma(n') \pmod{2^3},  &\text{if\ } \alpha = \beta, m'n'\equiv3\pmod{8}, \\
&\equiv -12m'\sigma_7(m')\sigma_7(n') \pmod{2^8}, &\text{if\ } \alpha = \beta, m'n'\equiv5\pmod{8}.
\end{align*}

\begin{align*}
a_0^{(8)}(2^\alpha m',2^\beta n') &\equiv -2^{11}m'\sigma_7(m')\sigma_7(n') \pmod{2^{16}} &\text{if\ } \alpha = \beta - 1, \\
&\equiv 20m'\sigma_7(m')\sigma_7(n') \pmod{2^7}, &\text{if\ } \alpha = \beta \neq 0, m'n'\equiv1\pmod{8}, \\
&\equiv \frac{1}{2}m'\sigma(m')\sigma(n') \pmod{2^3}, &\text{if\ } \alpha = \beta \neq 0. m'n'\equiv3\pmod{8}, \\
&\equiv -12m'\sigma_7(m')\sigma_7(n') \pmod{2^8}, &\text{if\ } \alpha = \beta \neq 0, m'n'\equiv5\pmod{8}, \\
&\equiv \frac{1}{2}m'\sigma(m')\sigma(n') \pmod{2^3}, &\text{if\ } \alpha=\beta=0, m'n'\equiv3\pmod{8}.
\end{align*}

\begin{align*}
a_0^{(16)}(2^\alpha m',2^\beta n') &\equiv -2^{11}m'\sigma_7(m')\sigma_7(n') \pmod{2^{16}}, &\text{if\ } \alpha=\beta-1, \\
&\equiv 20m'\sigma_7(m')\sigma_7(n') \pmod{2^7}, &\text{if\ } \alpha=\beta > 1, m'n'\equiv1\pmod{8}, \\
&\equiv \frac{1}{2}m'\sigma(m')\sigma(n') \pmod{2^3},  &\text{if\ } \alpha=\beta > 1, m'n'\equiv3\pmod{8}, \\
&\equiv -12m'\sigma_7(m')\sigma_7(n') \pmod{2^8}, &\text{if\ } \alpha=\beta > 1, m'n'\equiv5\pmod{8}, \\
&\equiv \frac{1}{2}m'\sigma(m')\sigma(n') \pmod{2^3}, &\text{if\ } \alpha=\beta=1, m'n'\equiv3\pmod{8}.
\end{align*}

\begin{align*}
a_0^{(9)}(3^\alpha m',3^\beta n') &\equiv \mp 3^5\frac{\sigma(m')\sigma(n')}{n'} \pmod{3^8}, &\text{if\ } \alpha = \beta - 1, m'n'\equiv \pm1\pmod{3}, \\
&\equiv \mp 3^6\frac{\sigma(m')\sigma(n')}{n'} \pmod{3^9}, &\text{if\ } \alpha = \beta + 1, m'n'\equiv \pm1\pmod{3}, \\
&\equiv \frac{2\cdot3^3\sigma(m')\sigma(n')}{n'} \pmod{3^7}, &\text{if\ } \alpha = \beta, m'n'\equiv 1\pmod{3}.
\end{align*}

\begin{align*}
a_0^{(25)}(5^\alpha m',5^\beta n') &\equiv -5^{\beta-\alpha+1}3^{\beta-\alpha-1}(m')^2n'\sigma(m')\sigma(n') \pmod{5^{\beta-\alpha+2}}, &\text{if\ } 0 < \beta-\alpha \leq 3, \\
&\equiv -5^3(m')^2n'\sigma(m')\sigma(n') \pmod{5^4}, &\text{if\ } \alpha = \beta +1, \\
&\equiv 10(m')^2n'\sigma(m')\sigma(n') \pmod{5^2}, &\text{if\ } \alpha = \beta, \left(\frac{m'n'}{5}\right) = -1.
\end{align*}

\end{thm}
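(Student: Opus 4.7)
The plan is to reduce Theorem~\ref{Griffinextension} to Griffin's Theorem~\ref{thm:Griffin}. Observe first that every congruence claimed here has the same leading term, up to easily tracked factors, as one of Griffin's level-$1$ congruences, and that the modulus asserted in each case is only one power of $p$ beyond the divisibility already guaranteed by Theorems~\ref{thm:Nick}, \ref{thm:Thornton1}, \ref{thm:Thornton3}, and~\ref{Congruences}. Consequently, it is enough to compare $a_0^{(N)}(p^\alpha m', p^\beta n')$ against a small number of level-$1$ coefficients and verify that the difference lies in the required higher-power residue class.

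First, for each $N$ in the list I would expand $f_{0,m}^{(N)}(z)$ as an explicit linear combination of level-$1$ basis elements $f_{0,k}^{(1)}(dz)$ for $d \mid N$, with correction terms chosen to enforce the pole and vanishing conditions of the level-$N$ canonical basis at the non-infinity cusps of $\Gamma_0(N)$. For the prime levels $N = p$ and the prime-squared levels $N = 9, 25$ the decomposition is short because $\Gamma_0(N)$ has few cusps, and for $N = 4, 8, 16$ the explicit basis constructions carried out earlier in this paper supply the analogous identities. Extracting the $q^{p^\beta n'}$-coefficient from such a decomposition yields a finite sum of level-$1$ coefficients of the form $a_0^{(1)}(k, p^{\beta'} n')$, each of which is controlled by Theorem~\ref{thm:Griffin}.

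Second, I would identify the dominant summand case by case. For $N = p^2$ with $\alpha = \beta - 1$, the dominant contribution should come from a term of the form $f_{0, pm'}^{(1)}(pz)$, whose $q^{p^\beta n'}$-coefficient is $a_0^{(1)}(pm', p^{\beta-1}n')$, handled by Griffin's balanced case $\alpha_1 = \beta_1$; the $\alpha = \beta + 1$ case is similarly reduced to Griffin's $\alpha_1 - \beta_1 = 2$ case; and the $\alpha = \beta$ cases reduce directly to Griffin's balanced case at the same exponents. All other summands in the decomposition should be divisible by strictly higher powers of $p$, either via Griffin's wider-gap congruences or via extra factors of $p$ carried by the correction terms, and hence fall outside the claimed modulus. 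The multiplicative shape of the leading expression (the factors $m'\sigma_7(m')\sigma_7(n')$, $\sigma(m')\sigma(n')/n'$, and so on) then transfers automatically from Griffin's theorem, including the sign conventions indexed by $m'n' \bmod p$ and by the Legendre symbol.

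The main obstacle I expect is the sheer volume of the case analysis: for each level $N$, each residue condition on $m'n'$ modulo $p$, and each of the conditions $\alpha = \beta \pm 1$ or $\alpha = \beta$, one must identify the dominant summand, verify that every subleading contribution lands in a higher power of $p$, and match the leading constant (with sign) exactly. For $N = 8$ and $N = 16$ this is more delicate because $\Gamma_0(N)$ has cusps beyond $\infty$ and $0$, so intermediate-level basis elements (from levels $2$ and $4$) enter the decomposition and the $2$-adic valuations of the correction terms must be tracked carefully. No essentially new technique beyond those already developed in~\cite{Griffin, JT} and in the earlier sections of this paper is required, but the matching is computation-heavy and must be carried out separately for each of the nine levels appearing in the statement.
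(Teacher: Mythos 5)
Your reduction to Griffin's theorem is the right target, but the mechanism you propose has a genuine gap at its foundation. You want to write $f_{0,m}^{(N)}(z)$ as a linear combination of level-one basis elements $f_{0,k}^{(1)}(dz)$ for $d \mid N$ ``with correction terms,'' and then isolate a dominant summand. No such decomposition is established, and already the first nontrivial case fails: to match the principal part $q^{-1}$ of $f_{0,1}^{(2)}(z)$ at $\infty$ one could only use $f_{0,1}^{(1)}(z)$ plus a constant (every $f_{0,k}^{(1)}(2z)$ with $k\ge 1$ has pole order at least $2$ at $\infty$), but $f_{0,1}^{(1)}(z) = q^{-1} + 196884q + \cdots$ and $f_{0,1}^{(2)}(z) = q^{-1} + 276q - 2048q^2 + \cdots$ do not differ by a constant, and $f_{0,1}^{(1)}(z)$ has a pole at the cusp $0$ of $\Gamma_0(2)$ while $f_{0,1}^{(2)}(z)$ does not. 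Leaving the ``correction terms'' unspecified makes the decomposition vacuous, and without an explicit decomposition the dominant-summand analysis and the matching of leading constants cannot begin. (Your framing that each asserted modulus is ``only one power of $p$ beyond'' the known divisibilities is also inaccurate: for $p=2$ and $\alpha=\beta-1$ the prior divisibility is by $2^{11}$ while the asserted modulus is $2^{16}$, and in the $\alpha=\beta$ cases there is no prior divisibility at all.)

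The paper's route avoids all of this. For the prime levels it proves a congruence between entire $q$-expansions: $j(z)$ is written as an explicit Laurent polynomial in the Hauptmodul $\psi^{(p)}$ in which every term involving $\phi^{(p)} = 1/\psi^{(p)}$ carries a coefficient divisible by $p^{e_p}$ (with $p^{e_p} = 2^{16}, 3^9, 5^5, 7^4$); expressing $f_{0,m}^{(1)}$ as a polynomial in $j$ and comparing with the polynomial expression of $f_{0,m}^{(p)}$ in $\psi^{(p)}$, a pole-order argument forces the polynomial parts to agree, giving $f_{0,m}^{(1)} \equiv f_{0,m}^{(p)} \pmod{p^{e_p}}$ coefficient by coefficient. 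Griffin's congruences then transfer verbatim because each asserted modulus divides $p^{e_p}$; there is no dominant summand to isolate. The levels $4, 8, 16, 9, 25$ are then handled not by a fresh decomposition to level one but by the exact coefficient identities $a_0^{(16)}(4m,4n) = a_0^{(8)}(2m,2n) = a_0^{(4)}(m,n)$, $a_0^{(9)}(3m,3n) = a_0^{(3)}(m,n)$, and $a_0^{(25)}(5m,5n) = a_0^{(5)}(m,n)$ coming from the $U_p$ and $V_p$ relations of Lemma~\ref{ULemma}; these shift $(\alpha,\beta)$ by one and are what produce the boundary conditions such as $\alpha = \beta \neq 0$ for $N=8$ and $\alpha = \beta > 1$ for $N=16$, which your outline does not account for. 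If you wish to salvage your approach, the two ingredients you are missing are precisely this cross-level congruence of $q$-expansions and these exact $U_p/V_p$ identities.
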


\section{Constructing canonical bases}

In this section, we explicitly construct canonical bases for the spaces $M_k^\sharp(N)$ and $S_k^\sharp(N)$ for the levels $N = 8, 9, 16, 25$ and prove Theorem~\ref{Duality}.

The congruence subgroup $\Gamma_0(8)$ has 4 cusps, which may be taken to be at $0, \frac{1}{2}, \frac{1}{4}, \infty$. We choose the Hauptmodul \[\psi^{(8)}(z) = \frac{\eta^4(z)\eta^2(4z)}{\eta^2(2z)\eta^4(8z)} = q^{-1} -4+4q+2q^3+\cdots \in M_0^\sharp(8),\] where $\eta(z) = q^{1/24}\prod_{n=1}^\infty (1-q^n)$ is the Dedekind eta function.  The values of $\psi^{(8)}(z)$ at the cusps $0, \frac{1}{2}, \frac{1}{4}$ are $0, -8, -4$ respectively.  The basis elements $f_{0, m}^{(8)}(z)$ of weight $0$ for $m \geq 0$ are then polynomials of degree $m$ in $\psi^{(8)}(z)$, chosen so that the Fourier expansion begins \[f_{0, m}^{(8)}(z) = q^{-m} + \sum_{n \geq 1} a_0(m, n)q^n.\]

For spaces $M_k^\sharp(8)$ of arbitrary even weight $k$, the first basis element of weight $k$ may be obtained from the first basis element of weight $k+2$ or weight $k-2$ by multiplying or dividing by the form \[S^{(8)}(z) = \frac{\eta^8(8z)}{\eta^4(4z)} = q^2+4q^6+6q^{10}+\cdots \in M_2(8),\] which has all of its zeros at the cusp at $\infty$.  Thus, the first basis element will always be $(S^{(8)}(z))^{k/2}$.  The rest of the basis elements in weight $k$ may be constructed by multiplying by powers of $\psi^{(8)}(z)$ to get the appropriate leading term $q^{-m}$ and subtracting earlier basis elements to obtain the gap in the Fourier expansion.  We thus construct canonical bases for all $M_k^\sharp(8)$ consisting of modular forms $f_{k, m}^{(8)}(z)$ for each $m \geq -k$ with Fourier expansion
\[f_{k,m}^{(8)}(z) = q^{-m} + \sum_{n= k+1}^\infty a_k^{(8)}(m,n)q^n,\] and all of the coefficients $a_k^{(8)}(m, n)$ are integers.

In level $9$, we use the Hauptmodul $\psi^{(9)}(z) = \frac{\eta^3(z)}{\eta^3(9z)} = q^{-1} - 3 + 5q^2 - 7q^5 + \ldots$, which has a simple pole at $\infty$ and values of $0, 3\sqrt{3}\left(\frac{-\sqrt{3} \mp i}{2}\right)$ at the cusps at $0, \pm \frac{1}{3}$.  Basis elements $f_{0, m}^{(9)}(z)$ are polynomials of degree $m$ in $\psi^{(9)}(z)$, and the first basis element of weight $k \pm 2$ is obtained by multiplying or dividing the first basis element of weight $k$ by \[S^{(9)}(z) = \frac{\eta^6(9z)}{\eta^2(3z)} = q^2 + 2q^5 + 5q^8 +\ldots \in M_2(9).\]

In level $16$, the Hauptmodul is $\psi^{(16)}(z) = \frac{\eta^2(z)\eta(8z)}{\eta(2z)\eta^2(16z)}$, with a pole at $\infty$ and values of $0, -2, -4, -2 \mp 2i$ at the cusps $0, \frac{1}{8}, \frac{1}{2}, \pm \frac{1}{4}$.  The form \[S^{(16)}(z) = \frac{\eta^8(16z)}{\eta^4(8z)} = q^4+4q^{12}+6q^{20}+\ldots \in M_2(16)\] is used to obtain the first basis element in weight $k \pm 2$ from the first basis element in weight $k$.

In level $25$, the Hauptmodul is $\psi^{(25)}(z) = \frac{\eta(z)}{\eta(25z)}$, with a pole at $\infty$ and values of $0$, $\sqrt{5}\left(\frac{1-\sqrt{5}}{4} \mp i\sqrt{\frac{5+\sqrt{5}}{8}}\right)$, $\sqrt{5}\left(\frac{-1-\sqrt{5}}{4} \mp i\sqrt{\frac{5-\sqrt{5}}{8}}\right)$ at the cusps at $0, \pm \frac{1}{5}, \pm \frac{2}{5}$.  We define $E_0^{(25)}(z) = 1$, and note that the first basis element in weight $2$ has $q$-expansion given by \[E_2^{(25)}(z) = f_{2, -4}^{(25)}(z) = q^4 + q^6 + 2q^9 + 3q^{14} + 2q^{16} + \ldots;\] it has maximal order of vanishing at $\infty$ and has integral Fourier coefficients, but is not a linear combination of eta-quotients.  To obtain the first basis element of weight $k \pm 4$ from the first basis element of weight $k$, we multiply or divide by the weight $4$ form given by \[S^{(25)}(z) = \frac{\eta^{10}(25z)}{\eta^2(5z)} = q^{10} + 2q^{15} + 5q^{20} + 10q^{25} + \ldots \in M_4(25),\]
so that if $k = 4\ell + k'$ with $\ell \in \mathbb{Z}$ and $k' \in \{0, 2\}$, then the first basis element in weight $k$ is $E_{k'}^{(25)}(z) (S^{(25)}(z))^\ell$.  To get additional basis elements we multiply earlier basis elements by $\psi^{(25)}(z)$ and row reduce.

Constructing a canonical basis $\{g_{k, m}^{(N)}(z)\}$ for the subspace $S_k^\sharp(N)$ of forms which vanish at all cusps away from $\infty$ follows a similar process.  To obtain the first basis element of $S_k^\sharp(N)$, we multiply the first basis element $f_{k, m}^{(N)}(z)$ of $M_k^\sharp(N)$ by $\prod_\frac{a}{b} (\psi^{(N)}(z) - c_{a/b}),$ where $\frac{a}{b}$ runs over the non-$\infty$ cusps of $\Gamma_0(N)$ and $c_{a/b}$ is the value of $\psi^{(N)}(z)$ at the cusp $\frac{a}{b}$.  Additional basis elements are constructed by multiplying by $\psi^{(N)}(z)$ and row reducing.

If $n_0, n_1$ are the maximal orders of vanishing at $\infty$ for a form in $M_k^\sharp(N), S_k^\sharp(N)$, so that basis elements have the Fourier expansions \[f_{k, m}^{(N)}(z) = q^{-m} + \sum_{n > n_0} a_k^{(N)}(m, n) q^n,\]\[g_{k, m}^{(N)}(z) = q^{-m} + \sum_{n > n_1} b_k^{(N)}(m, n) q^n,\] then Table~\ref{n0n1table} gives values of $n_0$ and $n_1$ for the values of $N$ considered here; note that if $\Gamma_0(N)$ has $c_N$ cusps, then $n_1 = n_0 - c_N+ 1$.  We note also that by construction, all of the coefficients $a_k^{(N)}(m, n)$ are integers for $N = 8, 9, 16, 25$.
\begin{table}[]
\centering
\caption{Values of $n_0, n_1$}
\label{n0n1table}
\def\arraystretch{1.5}
\begin{tabular}{|l|l|l|l|l|}
\hline
$N$   &  8 & 9 &  16 & 25  \\
\hline
$n_0$ &   $k$   & $k$    &$2k$    &  $10\lfloor\frac{k}{4}\rfloor + 2k'$  \\
\hline
$n_1$ &    $k-3$  & $k-3$  &  $2k-5$   & $10\lfloor\frac{k}{4}\rfloor + 2k' - 5$ \\
\hline
\end{tabular}
\end{table}

\begin{proof}[Proof of Theorem~\ref{Duality}] With these bases constructed, we can prove Theorem~\ref{Duality}.  To do so, we note that the product $f_{k, m}^{(N)}(z)g_{2-k, n}^{(N)}(z)$ lies in the space $S_2^\sharp(N)$, and therefore must have a pole at $\infty$ since $n_1 = -1$ for $k=2$ and $N=8, 9, 16, 25$.  By examining the Fourier expansions and the values of $n_0$ and $n_1$, we see that the constant term of this form is $a_k^{(N)}(m, n) + b_{2-k}^{(N)}(n, m)$.

Ramanujan's $\theta$-operator $\theta = \frac{1}{2\pi i} \frac{d}{dz} = q \frac{d}{dq}$ sends modular forms in $M_0^\sharp(N)$ to forms in $S_2^\sharp(N)$, as can be seen from the well-known intertwining relation $\theta(f |_0 \gamma) = (\theta f)|_2 \gamma$ (see for example~\cite{BOR}).  Since $n_1 = -1$, the space $S_2^\sharp(N)$ is spanned by the derivatives $q \frac{d}{dq} f_{0, m}^{(N)}(z)$ for $m \geq 1$, and the constant term above must be zero, proving Theorem~\ref{Duality}. \end{proof}

\section{Proving congruences}

We require the $U_p$ and $V_p$ operators (see~\cite{Atkin}). For a modular form $f(z) = \sum_{n=n_0}^\infty a(n)q^n \in M_k^!(N)$, we define
\begin{align*}
U_p(f(z)) &=  \sum\limits_{n=n_0}^\infty a(pn)q^n \in M_k^{!}(pN), \\
V_p(f(z)) &=  \sum\limits_{n=n_0}^\infty a(n)q^{pn} \in M_k^{!}(pN).
\end{align*}
If $p|N$, then we actually have $U_p(f(z)) \in M_k^!(N)$, while if $p^2 | N$, then $U_p(f(z)) \in M_k^!(N/p)$.

To prove Theorem~\ref{Congruences}, we need the following lemma.
\begin{lemma}\label{ULemma}
The following equalities are true.
\begin{align*}
U_2(f_{0, 2m}^{(8)}(z)) &= f_{0, m}^{(4)}(z), &V_2(f_{0, m}^{(4)}(z)) &= f_{0, 2m}^{(8)}(z) \\
U_3(f_{0, 3m}^{(9)}(z)) &= f_{0, m}^{(3)}(z), &V_3(f_{0, m}^{(3)}(z)) &= f_{0, 3m}^{(9)}(z) \\
U_2(f_{0, 2m}^{(16)}(z)) &= f_{0, m}^{(8)}(z), &V_2(f_{0, m}^{(8)}(z)) &= f_{0, 2m}^{(16)}(z) \\
U_5(f_{0, 5m}^{(25)}(z)) &= f_{0, m}^{(5)}(z) &V_5(f_{0, m}^{(5)}(z)) &= f_{0, 5m}^{(25)}(z).
\end{align*}
Additionally, if $m'$ and the level are relatively prime, we have
\[U_2(f_{0, m'}^{(8)}(z)) = U_3(f_{0, m'}^{(9)}(z)) = U_2(f_{0, m'}^{(16)}(z)) = U_5(f_{0, m'}^{(25)}(z)) = 0.\]
\end{lemma}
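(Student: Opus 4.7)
The plan is to prove all four pairs of equalities by a single argument based on the uniqueness of the canonical basis. In each case, write $p$ for the prime dividing the larger level, so that $(N, N/p) \in \{(8,4),(9,3),(16,8),(25,5)\}$, and observe that $p^2 \mid N$, so the operator $U_p$ reduces the level from $N$ to $N/p$. The key structural fact I would first establish is that both operators preserve the condition of having poles only at $\infty$: $V_p$ maps $M_0^\sharp(N/p)$ into $M_0^\sharp(N)$, and $U_p$ maps $M_0^\sharp(N)$ into $M_0^\sharp(N/p)$.

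I expect this cusp analysis to be the main technical obstacle. The strategy is to evaluate $V_p(f)|\gamma$ or $U_p(f)|\gamma$ for $\gamma$ sending $\infty$ to a non-$\infty$ cusp of the target group, and to factor the resulting determinant-$p$ matrices as $\gamma' \delta$ with $\gamma' \in \Gamma_0(N/p)$ and $\delta$ upper triangular; the condition $p^2 \mid N$ is what allows such a factorization in every non-$\infty$ cusp case. Going through the explicit cusp lists for $\Gamma_0(8), \Gamma_0(9), \Gamma_0(16), \Gamma_0(25)$ recorded in Section~4 verifies that non-$\infty$ cusps of the target group pull back to non-$\infty$ cusps of the source group under $V_p$ or $U_p$, so forms holomorphic away from $\infty$ are sent to forms holomorphic away from $\infty$.

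Given this preservation, the $V_p$ identity follows immediately from uniqueness. The Fourier expansion of $V_p(f_{0,m}^{(N/p)})$ is $q^{-pm} + \sum_{n \geq 1} a_0^{(N/p)}(m,n) q^{pn}$, with principal part $q^{-pm}$ and zero coefficient for every $q^j$ with $-pm < j \leq 0$. Since $n_0 = 0$ in weight zero for each of these levels (Table~\ref{n0n1table}), this matches the defining gap property of $f_{0,pm}^{(N)}$, so $V_p(f_{0,m}^{(N/p)}) = f_{0,pm}^{(N)}$. Applying $U_p$ to both sides and using $U_p V_p = \mathrm{id}$ yields $U_p(f_{0,pm}^{(N)}) = f_{0,m}^{(N/p)}$.

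For the final assertion, consider $U_p(f_{0,m'}^{(N)})$ when $(m',p) = 1$. Its $j$-th Fourier coefficient is $a_0^{(N)}(m', pj)$: the equation $pj = -m'$ has no integer solution since $p \nmid m'$, so there is no principal part, and $a_0^{(N)}(m', 0) = 0$ by the gap condition. Thus $U_p(f_{0,m'}^{(N)})$ is an element of $M_0^\sharp(N/p)$ that is holomorphic at $\infty$ with vanishing constant term. Being a holomorphic modular form of weight zero on $\Gamma_0(N/p)$, it is constant; the vanishing constant term then forces it to be identically zero.
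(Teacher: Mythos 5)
Your argument is correct, and its skeleton---show that the operators respect the ``poles only at $\infty$'' condition, then invoke the gap characterization and uniqueness of the canonical basis elements---is the same as the paper's; the difference is in the key technical step. You prove separately that $U_p$ maps $M_0^\sharp(N)$ into $M_0^\sharp(N/p)$ and that $V_p$ maps $M_0^\sharp(N/p)$ into $M_0^\sharp(N)$, by factoring the determinant-$p$ matrices $\sm{1}{j}{0}{p}\gamma$ (resp.\ $\sm{p}{0}{0}{1}\gamma$) as an element of $\SL_2(\Z)$ times an upper-triangular matrix and checking that the resulting cusp is not $\Gamma_0$-equivalent to $\infty$. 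This does go through: for $\gamma=\sm{a}{b}{c}{d}$ the relevant denominators are $pc$ (up to a factor dividing $p$) for $U_p$ and $c/\gcd(p,c)$ for $V_p$, and the hypothesis $p^2\mid N$ is what rules out $\infty$-equivalence in the $V_p$ direction, while for $U_p$ only $p\mid N$ is needed (there $p^2\mid N$ serves to drop the level to $N/p$). The paper instead analyzes the composite $V_pU_p$, which stays at level $N$ and equals $\frac1p\sum_j f(z+j/p)$, so a single matrix factorization at one level suffices; it shows $V_pU_p$ fixes $f_{0,pm}^{(N)}$ and annihilates $f_{0,m'}^{(N)}$, and recovers the $U_p$ statements from injectivity of $V_p$. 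The paper's route avoids ever analyzing $U_p$ across two congruence subgroups, but it asserts without proof that $V_p(f_{0,m}^{(N/p)})\in M_0^\sharp(N)$---exactly the point your cusp analysis supplies---and your closing observation that a weight-$0$ form in $M_0^\sharp(N/p)$ holomorphic at $\infty$ with vanishing constant term is identically zero is a clean substitute for the paper's $V_p$-injectivity step. The only thing to tighten is that your cusp analysis is described rather than executed; writing out the factorization for general $\gamma$ and verifying the non-equivalence condition $N\nmid pc$ from $N/p\nmid c$ would make it complete.
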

From Lemma~\ref{ULemma}, it follows that
\[a_0^{(16)}(4m, 4n) = a_0^{(8)}(2m, 2n) = a_0^{(4)}(m, n),\]
\[a_0^{(9)}(3m, 3n) = a_0^{(3)}(m, n),\]
\[a_0^{(25)}(5m, 5n) = a_0^{(5)}(m, n),\]
and Theorem~\ref{Congruences} follows immediately from Theorems~\ref{thm:Nick}, \ref{thm:Thornton1}, and \ref{thm:Thornton3}.

We next prove Lemma~\ref{ULemma}, beginning with the $N=8$ case.  We first claim that if $f \in M_0^\sharp(8)$, then $V_2U_2f \in M_0^\sharp(8)$.  To see this, note that $f|\gamma$ is holomorphic at $\infty$ for all $\gamma \in \SL_2(\Z)\setminus\Gamma_0(8)$ .  It suffices to show that $(V_2 U_2 f)|\gamma$ is holomorphic at $\infty$ for all such $\gamma$.  However, $V_2 U_2 f(z) = \frac{1}{2} (f(z) + f(z+1/2))$, so it is enough to show that $f|\left[\begin{smallmatrix} 2&1\\0&2 \end{smallmatrix}\right]\left[\gamma\right]$ is holomorphic at $\infty$.  But this can be written as $f|\left[ \alpha \right] \left[ \begin{smallmatrix} \ast&\ast \\ 0&\ast \end{smallmatrix} \right]$ for some $\alpha \in \SL_2(\Z)\setminus\Gamma_0(8)$, so the claim follows.

By looking at Fourier expansions, we see that $V_2 U_2$ acts as the identity on $f_{0, 2m}^{(8)}(z)$ and annihilates $f_{0, 2m+1}^{(8)}(z)$.  Additionally, $V_2(f_{0, m}^{(4)}(z))$ must be a modular form in $M_0^\sharp(8)$ with a pole of order $2m$ at $\infty$, and \[V_2\left(U_2(f_{0,2m}^{(8)}(z)) - f_{0,m}^{(4)}(z)\right)\] is a form in $M_0^\sharp(8)$ which vanishes at $\infty$ and must therefore be identically zero, proving Lemma~\ref{ULemma} for $N=8$.  The proof for $N=9, 16, 25$ is similar.

For $p = 2, 3, 5, 7, 13$ define \[\psi^{(p)}(z) = \left(\frac{\eta(z)}{\eta(pz)}\right)^{\frac{24}{p-1}} \in M_0^\sharp(p), \phi^{(p)}(z) = \frac{1}{\psi^{(p)}(z)}.\]  We note that $\psi^{(p)}(z)$ has a pole at $\infty$ and a zero at $0$ and is equal to $f_{0, 1}^{(p)}(z) - C$ for some constant $C$, and that $\phi^{(p)}(z)$ vanishes at $\infty$ and has a pole at $0$.  To prove Theorem~\ref{Griffinextension}, we will use the following equalities of modular functions, writing $j(z) = f_{0, 1}^{(1)}(z) +744$ in terms of $\psi^{(p)}(z)$ and $\phi^{(p)}(z)$.
\begin{align*}
j(z) = &\psi^{(2)}(z) (1+2^8 \phi^{(2)}(z))^3, \\
     = &\psi^{(3)}(z) + 756 + 196830\phi^{(3)}(z) + 19131876(\phi^{(3)}(z))^2 + 387420489(\phi^{(3)}(z))^3, \\
     = &\psi^{(5)}(z) + 750 + 196875\phi^{(5)}(z) + 20312500(\phi^{(5)}(z))^2 \\
       &+ 615234375(\phi^{(5)}(z))^3 + 7324218750(\phi^{(5)}(z))^4 + 30517578125(\phi^{(5)}(z))^5, \\
     = &\psi^{(7)}(z) + 748 + 196882\phi^{(7)}(z) + 20706224(\phi^{(7)}(z))^2 + 695893835(\phi^{(7)}(z))^3 \\
       &+ 10976181104(\phi^{(7)}(z))^4 + 90957030178(\phi^{(7)}(z))^5 \\
       &+ 38756041628(\phi^{(7)}(z))^6 + 678223072849(\phi^{(7)}(z))^7.
\end{align*}
We use these and similar equalities to compute that
\begin{align*}
f_{0,1}^{(1)}(z) &\equiv f_{0,1}^{(2)}(z) \pmod{2^{16}}, \\
                 &\equiv f_{0,1}^{(3)} \pmod{3^9}, \\
                 &\equiv f_{0,1}^{(5)}(z) \pmod{5^5}, \\
                 &\equiv f_{0,1}^{(7)}(z) \pmod{7^4}, \\
                 &\equiv f_{0,1}^{(4)}(z) \pmod{2^8}, \\
                 &\equiv f_{0,1}^{(8)}(z) \pmod{2^4}, \\
                 &\equiv f_{0,1}^{(16)}(z) \pmod{2^2}, \\
                 &\equiv f_{0,1}^{(9)}(z) \pmod{3^3}, \\
                 &\equiv f_{0,1}^{(25)}(z) \pmod{5}.
\end{align*}

Writing $f_{0, m}^{(1)}(z)$ as a polynomial in $j(z)$ and using $j(z) = \psi^{(2)}(z) (1+2^8 \phi^{(2)}(z))^3$, we find that
\begin{align*}
f_{0, m}^{(1)}(z) &= j(z)^m + a_{m-1}j(z)^{m-1} + \ldots + a_0 \\
                  &= \psi^{(2)}(z)^m + A_{m-1} \psi^{(2)}(z)^{m-1} + \ldots + A_0 + 2^{16}P(\phi^{(2)}(z)),
\end{align*}
where $A_i \in \Z$ and $P(x)$ is a polynomial with integer coefficients.  Similarly, we can write $f_{0, m}^{(2)}(z)$ as a polynomial in $\psi^{(2)}(z)$ with integer coefficients, so that
\[f_{0, m}^{(2)}(z) = \psi^{(2)}(z)^m + B_{m-1}\psi^{(2)}(z)^{m-1} + \ldots + B_0\]
with $B_i \in \Z$.  Subtracting the two polynomials, we obtain
\begin{align*}
f_{0, m}^{(1)}(z) - f_{0, m}^{(2)}(z) &= \sum_{n \geq 1} C_n q^n \\
 &= \left(\sum_{i=0}^m (A_i - B_i) \psi^{(2)}(z)^i \right) + 2^{16}P(\phi^{(2)}(z)).
\end{align*}
Because this is a modular form in $M_0^!(2)$ which vanishes at $\infty$ and therefore cannot have poles of order $i$ at $\infty$, the coefficients $A_i - B_i$ of $\psi^{(2)}(z)^i$ on the right side must be zero for $i = m, m-1, m-2, \ldots, 1, 0$ in turn, and the difference $f_{0, m}^{(1)}(z) - f_{0, m}^{(2)}(z)$ must be divisible by $2^{16}$.  Using similar methods in other levels, we obtain the following result.
\begin{lemma}
For any $m > 0$, we have
\begin{align*}
f_{0, m}^{(1)}(z) &\equiv f_{0, m}^{(2)}(z) \pmod{2^{16}}, \\
                  &\equiv f_{0, m}^{(3)}(z) \pmod{3^9}, \\
                  &\equiv f_{0, m}^{(5)}(z) \pmod{5^5}, \\
                  &\equiv f_{0, m}^{(7)}(z) \pmod{7^4}.
\end{align*}
\end{lemma}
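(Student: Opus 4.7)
The plan is to adapt the argument given above for $p=2$ to each of $p=3,5,7$, using the four identities listed for $j(z)$ in terms of $\psi^{(p)}(z)$ and $\phi^{(p)}(z)$. First I would verify by direct factorization that in each identity, every coefficient of $(\phi^{(p)}(z))^k$ with $k \geq 1$ is divisible by $p^{e_p}$, where $(e_2,e_3,e_5,e_7) = (16,9,5,4)$. For instance, $196830 = 10\cdot 3^9$, $19131876 = 4\cdot 3^{14}$ and $387420489 = 3^{18}$ handle the $p=3$ case, and the coefficients listed for $p=5$ and $p=7$ factor similarly. This lets me rewrite each identity as
\[
j(z) = \psi^{(p)}(z) + c_p + p^{e_p}\, T_p\bigl(\phi^{(p)}(z)\bigr)
\]
for some integer $c_p$ and integer-coefficient polynomial $T_p$.

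Next I would expand $f_{0,m}^{(1)}(z) = P_1(j(z))$, where $P_1 \in \Z[x]$ is the monic polynomial of degree $m$ expressing the level-$1$ canonical basis element. Substituting the identity above and using $\psi^{(p)}(z)\phi^{(p)}(z) = 1$ to reduce any mixed monomials produces
\[
f_{0,m}^{(1)}(z) = A\bigl(\psi^{(p)}(z)\bigr) + p^{e_p}\, B\bigl(\phi^{(p)}(z)\bigr),
\]
with $A \in \Z[x]$ monic of degree $m$ and $B \in \Z[x]$, which I may arrange to have no constant term by absorbing $B(0)$ into $A$. Writing $f_{0,m}^{(p)}(z) = P_p(\psi^{(p)}(z))$ for the monic integer polynomial of degree $m$ supplied by the construction of the level-$p$ canonical basis, the difference becomes
\[
f_{0,m}^{(1)}(z) - f_{0,m}^{(p)}(z) = (A - P_p)\bigl(\psi^{(p)}(z)\bigr) + p^{e_p}\, B\bigl(\phi^{(p)}(z)\bigr).
\]

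Finally I would match Fourier coefficients at $\infty$. The left side has a $q$-expansion starting at $q^{1}$, while $\phi^{(p)}(z) = O(q)$ and $B$ has no constant term, so the second summand on the right contributes only strictly positive powers of $q$. Hence $(A-P_p)(\psi^{(p)}(z))$ must also start at $q^{1}$, and matching the coefficients of $q^{-(m-1)}, q^{-(m-2)}, \ldots, q^{-1}, q^{0}$ in turn forces $A-P_p$ to vanish identically, exactly as in the $p=2$ argument already in the text. The difference then equals $p^{e_p} B(\phi^{(p)}(z))$ and is manifestly divisible by $p^{e_p}$ as a $q$-series, giving the claimed congruence. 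The main obstacle is purely bookkeeping in the first step; once the divisibilities of the coefficients in the four $j(z)$ identities are confirmed, the rest of the argument is a direct replay of the $p=2$ case handled above.
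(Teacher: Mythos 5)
Your proposal is correct and takes essentially the same approach as the paper: the paper carries out exactly this argument for $p=2$ (expanding $f_{0,m}^{(1)}$ as a polynomial in $j$, substituting the identity $j = \psi^{(2)}(1+2^8\phi^{(2)})^3$, and matching Fourier coefficients to force the polynomial parts in $\psi^{(2)}$ to agree) and then simply asserts that ``similar methods in other levels'' give the remaining cases. Your verification of the $p$-adic valuations of the coefficients in the $p=3,5,7$ identities and the reduction via $\psi^{(p)}\phi^{(p)}=1$ is precisely the adaptation the paper has in mind.
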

Using these congruences, the $p = 2, 3, 5, 7$ cases of Theorem~\ref{Congruences} immediately follow from Theorem~\ref{thm:Griffin}; the $N = 4, 8, 16, 9, 25$ cases follow by applying the $V_p$ operator to the $p = 2, 3, 5$ cases and using Lemma~\ref{ULemma}.

\bibliographystyle{amsplain}

\end{document}